\definecolor{violet}{rgb}{0.50,0.00,.50}	% violet
\newcommand{\ed}{

\graphicspath{{Figures/}}
\begin{document}

\title{Stochastic firing rate models}

\author[1,2]{Jonathan Touboul}
\author[2]{Bard Ermentrout}
\author[3]{Olivier Faugeras}
\author[3,4]{Bruno Cessac}
\affil[1]{\small Department of Mathematical Physics, The Rockefeller University, New York, USA}
\affil[2]{\small Department of Mathematics, University of Pittsburgh, Pittsburgh, USA}
\affil[3]{\small NeuroMathComp Laboratory, INRIA, Sophia Antipolis, France}
\affil[4]{\small Laboratoire J. Dieudonn\'e, Universit\'e de Nice, France}

\maketitle

% \hrule
% 
% \medskip
% 
% \tableofcontents
% 
% \medskip
% 
% \hrule

\section*{Abstract}
We review a recent approach to the mean-field limits in neural networks that takes into account the stochastic nature of input current and the uncertainty in synaptic coupling. This approach was proved to be a rigorous limit of the network equations in a general setting, and we express here the results in a more customary and simpler framework. We propose a heuristic argument to derive these equations providing a more intuitive understanding of their origin. These equations are characterized by a strong coupling between the different moments of the solutions. We analyse the equations, present an algorithm to simulate the solutions of these mean-field equations, and investigate numerically the equations. In particular, we  build a bridge between these equations and Sompolinsky and collaborators approach~\cite{sompolinsky-crisanti-etal:88,crisanti-sommers-etal:90}, and show how the coupling between the mean and the covariance function deviates from customary approaches.

\section*{Introduction}
The problem of modeling neural activity at scales integrating the effect of thousands of neurons has been a great endeavor in neurosciences for several reasons. First, it corresponds to the mesoscopic scale that most imaging techniques are able to measure. The activity monitored by these classical imaging techniques (such as the electro-encephalogram, the magneto-encephalogram, the optical imaging, etc\ldots) shows the global behavior of a whole area that results from the activity of several hundreds to several hundreds of thousands of neurons. Second, anatomical data recorded in the cortex reveal the existence of structures, such as the cortical columns, with a diameter of about $50 \mu m$ to $1 mm$, containing of the order of one hundred to one hundred thousand neurons belonging to a few different types. These columns exhibit specific functions as a result of the collective behavior of several neurons, as evidenced for instance in the primary visual cortex V1, where cortical columns are organized in a specific geometric configuration and respond to preferential orientations of bar-shaped visual stimuli. In this case, information processing does not occur at the  scale of individual neurons but rather emerges from the collective dynamics of many interacting neurons contributing to a mesoscopic or macroscopic signal. The description of this collective dynamics requires models which are different from individual neuron models. Indeed, when the number of neurons is large enough, averaging effects  appear, and the collective dynamics is well described by an effective mean-field, summarizing the effect of the interactions of a neuron with the other neurons, and  depending on a few effective parameters.

Models including these features which use population rates or activity equations are the foundation of a common approach to modeling for neural networks. These equations provide meanfield dynamics for the firing rate or activity of neurons within a network given some connectivity. This approach was introduced by Wilson and Cowan in the 1970s \cite{wilson-cowan:72,wilson-cowan:73}, and has been widely studied since then. This approach assumes that the activity coming from a cortical column is captured in the mean-firing rate of the cells in the population.  

One of the principal limitations of firing rate models is that they only take into account mean firing rates and do not include any information about higher order statistics such as correlations between firing activity of different neurons of the same populations or across populations. We know that spike trains of individual cortical neurons {\em in vivo}  are very noisy and have interspike interval (ISI) distributions close to Poisson (see e.g. \cite{softky-koch:93}). However, at the level of a neural population, the question of the statistical distribution of the activity is still open, and its description has been the subject of a many studies in the last fifteen years. Numerous studies on integrate-and-fire networks have shown that under certain conditions, even though individual neurons would exhibit Poisson-like statistics, the neurons fire asynchronously so that the total population activity evolves according to a mean-field rate equation, with a characteristic activation or gain function \cite{abbott-vanvreeswijk:93,brunel:00,brunel-hakim:99}. Formally speaking, the asynchronous state only exists in the thermodynamic limit where the number of neurons tends to infinity (finite-size deviations from this limit are addressed in \cite{brunel-hakim:99}). Moreover, even if the asynchronous state exists it might not be stable. This has motivated a number of authors to analyze statistical correlations and finite-size effects in spiking networks \cite{ginzburg-sompolinsky:94,meyer-vanvreeswijk:02,mattia-del-giudice:02,elboustani-destexhe:09}. Recently, a few papers analyzed the effects of correlations by utilizing the master equation introduced by El Boustani and Destexhe \cite{elboustani-destexhe:09} and providing different moment truncations using statistical physics tools, such as path integrals \cite{buice-cowan:09} and the Van Kampen expansion \cite{bressloff:09}. These approaches enlighten a complex interplay between the mean activity and the mean correlations, that yield non-trivial qualitative effects, that begin to be uncovered (see \cite{touboul-ermentrout:10}). A complementary approach has been developed by Cai and collaborators based on a Boltzmann--like kinetic theory of integrate--and--fire networks \cite{rangan-kovacic:08,cai-tao:04}, which is itself an extension of population-density methods \cite{omurtag-knight:00,nykamp-tranchina:00}. A recent alternative approach proposes a mathematically derived mean-field equation on stochastic firing--rate networks \cite{faugeras-touboul-etal:09} taking into account variability in the connectivity weights. They rigorously derive the mean-field equation of the network when the number of neurons tends to infinity and study the solutions for the dynamic and stationary mean-field equations. 

The present article builds upon this latter approach, and aims to relate this abstract approach to  Wilson and Cowan deterministic mean-field equation \cite{wilson-cowan:72,wilson-cowan:73} on one hand, and few other stochastic methods on the other hand. To this purpose, we implement a simple yet very familiar model that fits the framework of \cite{faugeras-touboul-etal:09}. We propose, in this simpler case, a heuristic argument that yields the same mean-field equation they obtain, analyse these equations, describe an algorithm to compute numerically the unique solution of these equations, and finally, investigate numerically the solutions of these equations.

\section{Material and Method : The stochastic firing-rate model}
In this article we study a model that we call the \emph{stochastic firing-rate model}, a stochastic version of the customary firing rate models. This section is aimed to define this model.

\subsection{The firing-rate model}
The building block of the network we study here consists of firing-rate Wilson and Cowan models \cite{wilson-cowan:72,wilson-cowan:73}, where we take into account the stochastic nature of synaptic inputs. In details, we consider a network composed of $N$ neurons indexed by $i \in \{1,\,\ldots,\,N\}$ belonging to $P$ populations indexed by $\alpha \in \{1,\,\ldots,\, P\}$, and let us denote $N_{\alpha}$ to be the number of neurons in population $\alpha$. Each neuron is described by its membrane potential $V_i$. Alternatively, this model can be seen as a model of cortical area, where the interacting units are cortical columns that follow Wilson and Cowan equations.

The Wilson and Cowan model supposes that  each incoming spike provokes a postsynaptic potential when received, and that these postsynaptic potentials are summed linearly to produce the membrane potential $V_i(t)$. A single action potential from neuron  $j$ received at time $t^*$ by the neuron $i$ provokes the addition of  potential $PSP_{ij}(t-s)$, where $s$ is the time of the spike hitting the synapse and $t$ the time after the spike. We neglect the delays due to the distance travelled down the axon by the spikes.

Under the assumption that the post-synaptic potentials sum linearly, the average membrane potential of neuron $i$ is given by the sum of all postsynaptic potentials coming from the other neurons $j$, through the formula:
\begin{equation}\label{eq:PSPsum}
V_i(t)=\sum_{j,k} PSP_{ij}(t-t^k_j) = \int_{t_0}^t PSP_{ij}(t-s)\sum_{t_k^j} \delta(t_j^k-s)\,ds
\end{equation}
where the sum is taken over the arrival times $t_j^k$ of the spikes produced by the neurons $j$. When the number of neurons tends to infinity, or when the firing rate of neurons increase, the sum can be approximated by the mean firing rate of each neuron, to first order. More precisely, this approximation consists in supposing that since the number of spikes arriving from neuron $j$ between $t$ and $t+dt$ is $\nu_j(t)dt$, we can approximate equation \eqref{eq:PSPsum} by:

\begin{equation*}
V_i(t)=\sum_j \int_{t_0}^t PSP_{ij}(t-s) \nu_j(s)\,ds.
\end{equation*}

We now assume that the neuron received input for arbitrary long times, i.e. $t_0=-\infty$. Another very important assumption of the rate model is that the relation between the mean firing rate and the membrane potential can be expressed in the form $\nu_i(t)=S_i(V_i(t))$, where $S_i$ is sigmoidal function (called the firing rate, or squashing function, see e.g.~\cite{gerstner-kistler:02,dayan-abbott:01}, and this function can be approximated in some cases using an averaging method (see section \ref{sec:squas}).

\begin{equation*}
V_i(t)=
\sum_j \int_{-\infty}^t PSP_{ij}(t-s) S_j(V_j(s))\,ds,
\end{equation*}

We finally note that the postsynaptic potentials $PSP_{ij}$ can depend on several variables in order to account, for instance, for adaptation or learning. However, in this paper, we follow the common assumption, initially made by Hopfield in \cite{hopfield:84}, that although the sign and amplitude may vary, the post-synaptic potential has the same shape no matter which presynaptic population caused it. This leads to the relation
\[
PSP_{ij}(t)=J_{ij} g_i(t).
\]
$g_{i}$ represents the unweighted shape (called a g-shape) of the postsynaptic potentials and $J_{ij}$ is the strength of the postsynaptic potentials elicited by neuron  $j$ on neuron $i$. In this paper, we further assume the g-shape to be an exponential function, and that both the g-shape and the sigmoidal functions only depends on the population ($\alpha$) the neuron $i$ belongs to: $g_i(t)=g_\alpha(t)=e^{-t/\tau_{\alpha}} \mathbbm{1}_{t>0}$. Under these assumptions, the membrane potential of neuron $i$ from population $\alpha$ can be written as the solution of the following differential equation:
\[
\der{V_{\alpha}}{t} = -\frac{1}{\tau_{\alpha}} V_{\alpha} + \sum_{\beta=1}^P\sum_{j=1}^{N_{\beta}} S_{\beta}(V_j(t)).\]

This equation models the interactions between the neurons in the network. External inputs also have a great influence in the system. These inputs are modelled as the sum of a deterministic input and a noisy input described in section \ref{sec:rand}. The deterministic input, generally assumed to originate from the thalamus, accounts for sensory inputs and cortico-cortical deterministic activity. These inputs are assumed to be identical for all neurons in the same population, and are denoted $I_{\alpha}(t)$ for the input to the population $\alpha$. Therefore, we end up with the deterministic model:
\[
\der{V_{\alpha}}{t} = -\frac{1}{\tau_{\alpha}}\,V_{\alpha} + I_\alpha (t)+ \sum_{\beta=1}^P\sum_{j=1}^{N_{\beta}} S_{\beta}(V_j(t)).
\]
which corresponds to a case of the Wilson and Cowan system. We now include the effect of noise in this model.

\subsection{Randomness in the network}\label{sec:rand}

Cortical areas receive a constant bombardment of action potentials originating from different sources, ranging from sensory stimuli to motor areas and cortico-cortical activity. These signals are characterized by a high level of variability, which is a result of the randomness\footnote{Some authors consider this variability as a result of an underlying finite dimensional chaos, see e.g.~\cite{faure-korn:01,korn-faure:03}} of processes arising in neuronal phenomena (see e.g.\cite{softky-koch:93,shadlen-newsome:94}). This noise can have different origins. It is the result of contributions attributed, for instance, to thermal noise arising from the discrete nature of electric charge carriers, the behavior of ion channels, synaptic transmission failures and global network effects. We also include in this noise term all the unrelated activity either coming from external stimuli or from extra network cortico-cortical activity. This bombardment of ``random'' spikes results in a noisy current $n_i$ whose probability distribution only depends on the population the neuron belongs to. Using the classical diffusion approximation  (see e.g. \cite{gerstner-kistler:02b,destexhe-mainen-etal:98,touboul-faugeras:07b}), we model that the noisy current $n_i$ as a white noise, the formal differential of a centered Wiener process with standard deviation $f_{\alpha}$ for neurons in population $\alpha$, and we assume that the noise received by each neuron is independent of those received by the other neurons.

Moreover, the synaptic weights $J_{ij}$ are known up to a certain precision, and cannot be defined individually with high precision. The mean connectivity, and the standard deviation of the error done in the measurement can nevertheless be experimentally evaluated. The random synaptic weights are therefore assumed to be drawn from a Gaussian distribution of mean $\Jbab/N_{\beta}$ and standard deviation $\Jab/ \sqrt{N_{\beta}}$, and do not evolve (they are said to be frozen during the evolution). The scaling chosen ensures that the local interaction process $\sum_{j=1}^{N_\beta} J_{ij}S_\beta(V_j(t))$, summarizing the effects of the neurons in population $\beta$ on neuron $i$, has a mean and variance which do not depend on $N_\beta$ and are only controlled by the phenomenological parameters $\Jbab,\Jab$. We finally make the simplifying technical assumption that the $J_{ij}$ are independent\footnote{
It is however known that synaptic weights are indeed correlated (e.g. 
via synaptic plasticity mechanisms);  these correlations are built by dynamics 
via a complex interwoven  evolution between neurons and synapses dynamics and postulating the form of synaptic weight correlations requires, on theoretical grounds, a detailed investigation of the whole history of neuron-synapse dynamics.}

Under these assumptions, the membrane potential of neuron $i$ in population $\alpha$ satisfies the equation
\begin{equation}\label{eq:NetEq}
	dV_i(t) = \left ( -\frac{1}{\tau_{\alpha}} \, V_i(t) +\sum_{\beta=1}^P \sum_{j=1}^{N_\beta}  J_{ij} S_{\beta}(V_j(t)) + I_{\alpha}(t)\right)\, dt + f_{\alpha} dW_i(t)
\end{equation}

\subsection{Averaging Models and Firing rate functions}\label{sec:squas}

The nonlinear form of the firing rate function has important implications; there are many choices that various authors have used. The simplest is the step function, where the neuron fires maximally or not at all depending on whether the potential is above or below threshold. If one uses a statistical-mechanical approach to derive ‘mean-field’ equations \cite{amari:72} then this sharp step function is smoothed out and can be represented by one of the well-known squashing functions: 
\begin{equation*}
  \begin{cases}
    S(V)=\frac{S_{\max}}{1+e^{-(V-V_T)/V_{s}}} \qquad \text{or}\\
    S(V)=\frac{S_{\max}}{2} (1+\erf (-(V-V_T)/V_{s}))
  \end{cases}
\end{equation*}
The first one is often referred to as the logistic function, and the second as the Gaussian function; $V_T$ is an activation threshold and $V_s$ governs the slope of the sigmoid. These choices appear somehow subjective and arbitrary.

When the number of neuron increases, this squashing function can be computed explicitly in some simple cases, using averaging properties (see e.g. \cite{rinzel-frankel:92} for class II neurons and \cite{ermentrout:94} for class I neurons). This method can guide the choice of the squashing function. It is described in more detail in section \ref{sec:squashMF}.

\section{Theory and Calculations}
 
The network we consider is governed by the $N$ equations such as \eqref{eq:NetEq}. In these equations, the neuron $i$ of population $\alpha$ integrates external deterministic and stochastic inputs, and receives from the network neurons a current, which we call the \emph{microscopic interaction process}. The mean-field problem consists in finding an equation that governs the activity of the neurons, at the level of the network population, when the number of neurons tends to infinity. In the stochastic setting of this article, this question is understood as a limit in law, under the joint law of the connectivities and the Brownian motions. In this section, we first review a way to obtain the squashing functions, before deriving the mean-field equations for the networks we are interested in.

\subsection{Averaging method and squashing functions}\label{sec:squashMF}
The squashing function corresponds to the averaged effect of many interconnections. When the number of neurons increases, this squashing function can be computed explicitly in some simple case, using averaging properties when assuming slow synapses (see e.g. \cite{rinzel-frankel:92} for class II neurons and \cite{ermentrout:94} for class I neurons). This method can provide a choice of squashing function. Let us consider for instance the following system of coupled neurons: 
\begin{equation}\label{eq:systemaveraging}
  \begin{cases}
    C \der{V_j}{t} + I^{\text{ion}}_j (V_j, w_j ) = \sum_{k} \lambda_{jk} s_k(t) \, (V^{\text{rev}}_k - V_j ) + I^{\text{appl}}_j\\
    \der{w_j}{t}  = q(V_j , w_j )\\
    \der{s_j}{t}  = \varepsilon (\hat{s}_j(V_j)-s_j)
  \end{cases}
\end{equation}
The $j$th cell of the network is represented by its potential, $V_j$, and all of the 
auxiliary channel variables that make up the dynamics for the membrane, 
$w_j$. The term $I^{\text{appl}}_j$ is any tonic applied current. Finally, the synapses are modeled by simple first order dynamics and act to hyperpolarize, shunt, or depolarize 
the postsynaptic cell. $\varepsilon$ is a small parameter corresponding to the time scale of the synapses compared to the time scale of evolution of the membrane potential, and is assumed to be small. Associated with each neuron is a synaptic channel 
whose dynamics is governed by the variable $s_j$ which depends in a (generally nonlinear) manner on the somatic potential. Thus, one can think of $s_j$ as being the fraction of open channels due to the presynaptic potential. 
The functions $\hat{s}_j$ have maxima of 1 and minima of 0. The effective maximal 
conductances of the synapses between the cells are in the nonnegative numbers $\lambda_{jk}$ and the reversal potentials of each of the synapses are $V^{\text{rev}}_k$. The goal is to derive equations that involve only the $s_j$ variables and thus reduce 
the complexity of the model while retaining the qualitative features of the original.

The main idea is to exploit the smallness of $\varepsilon$ and thus invoke the averaging theorem on the slow synaptic equations. Each of the slow synapses is held constant and the membrane dynamics equations are solved for the potentials, $V_j (t; s_1 , \ldots , s_n )$. The potentials, of course, naturally depend on the values of the synapses. We will assume that the potentials are either constant or periodic with period $T (s_1 , \ldots , s_n )$. Once the potentials are found, one then averages the slow synaptic equations over one period of the membrane dynamics obtaining:
\[\der{s_j}{t} =  \varepsilon(S_j (s_1, \cdots, s_n)-s_j )\]
where 
\begin{equation}\label{eq:Sj} 
	S_j (s_1 , \ldots , s_n ) = \frac{1}{T_j (s_1 ,\ldots , s_n)}\int_0^{T_j (s_1 ,\ldots , s_n)} \hat{s}_j (V_j (t; s_1 , \ldots , s_n ))dt.
\end{equation}
\begin{remark}
	Note that if we assume that when the cell is not firing, the potential is below the threshold for the synapse, we obtain a model very close to the activity-based (see \cite{ermentrout:98}).
\end{remark}
This method allows derivation of the squashing function. The main assumption is that the detailed phase and timing of individual spikes is not important; that is, one is free to average over many spiking events. Rinzel and Frankel apply the same averaging method to derive equations for a pair of mutually coupled neurons that was motivated by an experimental preparation. In their paper, they require only cross connections with no self-self interactions, and they are able to numerically determine the potential as a function of the strength of the synapses. They use a class of membrane models that are called “class II” (see \cite{rinzel-ermentrout:89}) where the transition from rest to repetitive firing occurs at a subcritical Hopf bifurcations that appears, for instance, in the Hodgkin-Huxley model. This latter assumption implies that the average potential exhibits hysteresis as a function of the synaptic drive, i.e. the functions $\hat{s}_j (s_k)$ are multivalued  for some interval of values of $s_k$. Because of this hysteresis, they are able 
to combine an excitatory cell and an inhibitory cell in a network and obtain oscillations, which is impossible for smooth  $\hat{s}_j (s_k )$ without self-excitatory interactions (see, e.g. \cite{rinzel-ermentrout:89}).

Class I membranes are simpler, as proved by Ermentrout in \cite{ermentrout:94}. It is known (see e.g. \cite{ermentrout-kopell:86}) that the frequency of oscillation in Class I membranes evolves as the square root of the bifurcation parameter (denoted by $p$):
\begin{equation}\label{eq:omega}\omega=C(p^*)\sqrt{(p-p^*)^+} + O(\vert p-p^* \vert).\end{equation}
If we make the approximation that $\hat{s}_j$  are simple on/off function, that is equal to $0$ if the membrane is at rest (and thus below threshold) and $1$ if the voltage is above threshold. Let $\xi(p)$ denote the amount of time during one cycle that the potential is above threshold. Then the equation \eqref{eq:Sj}, depending on the critical parameter $p$ on which the potential depends, 
is simplified to 
\[S(p) = \frac{1}{T(p)}\int_0^{T(p)} \hat{s}(V (t; p))dt = \frac{\omega(p)\xi(p)}{2\pi}\]
where we have used the fact that $T = 2\pi/\omega$. A fortuitous property of class I 
membranes is that the time for which the spike is above threshold is largely 
independent of the period of the spike so that $\xi(p)$ is essentially constant. 
(This is certainly true near threshold and we have found it to be empirically 
valid in a wide parameter range.) Thus, combining this with \eqref{eq:omega}, we obtain 
the very simple squashing function: 
\[S(p)=C(p^*)\sqrt{(p-p^*)^+}.\]
where $C(p)$ and $p$ depend on the other parameters in the model.

The value $p^*$ and the constant $C(p^*)$ need to be evaluated. Ermentrout in \cite{ermentrout:94} provides closed-form expressions for these quantities in the case of Morris-Lecar membrane model, and obtains a good agreement with the full system in numerical simulations. The resulting equations have the very nice compact form in a two cells network:
\begin{equation}\label{eq:2DimSquash}
\begin{cases}
  \tau_e \der{s_e}{t}+s_e & = C_e \sqrt{(\lambda_{ee}s_e - g_e^*(\lambda_{ie}s_i ,I_e))^+}\\
  \tau_i \der{s_i}{t}+s_i & = C_i \sqrt{(\lambda_{ei}s_e - g_e^*(\lambda_{ii}s_i ,I_i))^+}
\end{cases}
\end{equation}
where  $g_e^*(g, I )$ is the two-parameter surface of critical values of the excitatory conductance.
Note that these squashing functions are not similar to the usual nonlinear transforms between the voltage and the firing rate. They are less smooth than the usual ones, and in particular, are not differentiable at $p=p^*$: the slope at the bifurcation is infinite. The differential equations, such as \eqref{eq:2DimSquash}, obtained with this squashing function do not satisfy the Lipschitz standard condition for the existence and uniqueness of solutions. Moreover, in one dimension. differential equations with a square-root nonlinearity are classical counter-examples to the uniqueness condition: there can exist multiple solutions starting from the same initial condition. Therefore, this mean-field method yields equations that are complex to investigate, both in the class I case where the function is multivalued as in the class II case where it is non-smooth. Note eventually that this approach yields a mean-field description of model \eqref{eq:systemaveraging} in the sense that it results from an averaging of the behavior of an assembly of cells, when the number of neurons tends to infinity.

\subsection{Derivation of the Mean-Field equations}
Considering now a network of firing rate neurons of type \eqref{eq:NetEq}, a rigorous proof of the existence of the mean-field limit and the characterization of this limit is provided in \cite{faugeras-touboul-etal:09}, where the authors derived their calculations from the work of Ben-Arous and Guionnet \cite{ben-arous-guionnet:97}. This proof is extremely technical, but can be approached with non-rigorous heuristic arguments. We develop the heuristic proof of the derivation of this mean-field equation because we believe it provides a better understanding of what the technical mean-field limit derived in \cite{faugeras-touboul-etal:09} physically corresponds to. 

The heuristic proof we propose here is based on Amari's local chaos hypothesis introduced in 1972 and then widely used \cite{amari:72,amari-yoshida-etal:77,sompolinsky-crisanti-etal:88,crisanti-sommers-etal:90,cessac:95,samuelides-cessac:07}, and can be expressed as follows:

\noindent{\bf Amari's Local Chaos Hypothesis: }{\it
	For $N$ is sufficiently large, all the $V_i$ are pairwise stochastically independent, are independent of the connectivity parameters $J_{ij}$, and have a common distribution population per population. }

\begin{proposition}\label{prop:Convergence}
	Under Amari's local chaos hypothesis, the microscopic interaction term $U^N_{\alpha \beta}$ corresponding  to the current from population $\beta$ on a neuron $i$ in population $\alpha$ defined by:
\begin{equation}\label{eq:MicroInter}
	U^N_{i \beta}:=\sum_{j=1}^{N_{\beta}} J_{ij} S_{\beta}\big(V_j(t)\big)
\end{equation}
has a distribution that only depends on the populations $\alpha$ and $\beta$, and converges in law (under the joint probability of the connectivity weights and the voltages) when the number of neurons tends to infinity towards an \emph{effective interaction process} $U^{V}_{\alpha \beta}$ such that:
 \begin{equation}\label{eq:effectiveInteractionProcessParams}
  \begin{cases}
   \Exp{U^{V}_{\alpha\beta}(t)} = \Jbab \mathbb{E}[S_{\beta}(V_{\beta}(t))] \\
   \Cov(U^{V}_{\alpha\beta}(t), U^{V}_{\gamma \delta}(s))  = \Jdab \mathbb{E}\Big[ S_{\beta}(V_{\beta}(t)) S_{\beta}(V_{\beta}(s))\Big] \mathbbm{1}_{\alpha=\gamma;\, \beta=\delta}
  \end{cases},
 \end{equation}
where $V_{\alpha}$ corresponds to the common law of the neurons in population $\alpha$ ensured by the local chaos hypothesis, and $\mathbbm{1}_A$ is the indicator function of the set $A$.
\end{proposition}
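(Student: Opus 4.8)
The plan is to read the proposition as a conditional law-of-large-numbers/central-limit statement, and to exploit the fact that the weights are \emph{exactly} Gaussian. First I would write each weight as $J_{ij} = \Jbab/N_\beta + (\Jab/\sqrt{N_\beta})\,\xi_{ij}$, where the $\xi_{ij}$ are i.i.d.\ standard normal and, by the local chaos hypothesis, independent of all the $V_j$. Substituting into \eqref{eq:MicroInter} splits the interaction into a drift part and a fluctuation part,
\[
U^N_{i\beta}(t) = \underbrace{\frac{\Jbab}{N_\beta}\sum_{j=1}^{N_\beta} S_\beta(V_j(t))}_{M^N(t)} \;+\; \underbrace{\frac{\Jab}{\sqrt{N_\beta}}\sum_{j=1}^{N_\beta} \xi_{ij}\,S_\beta(V_j(t))}_{F^N(t)}.
\]
For the drift, since $S_\beta$ is a bounded sigmoid and the $V_j(t)$ are i.i.d.\ with common law $V_\beta(t)$, the strong law of large numbers gives $M^N(t)\to \Jbab\,\mathbb{E}[S_\beta(V_\beta(t))]$ almost surely, which is exactly the announced deterministic mean.

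The heart of the argument is the fluctuation term $F^N$. Conditionally on the family $(V_j)_j$, the quantity $F^N(t)$ is a finite linear combination of the independent Gaussians $\xi_{ij}$, hence is itself \emph{exactly} Gaussian; this is precisely where the Gaussian assumption on the $J_{ij}$ pays off, since no Lindeberg-type central limit theorem is needed. Its conditional mean is $0$, and for two times $t,s$ its conditional covariance is $\tfrac{1}{N_\beta}\Jdab\sum_j S_\beta(V_j(t))S_\beta(V_j(s))$, the cross terms $\xi_{ij}\xi_{ik}$ with $j\neq k$ averaging to zero (here $\Jdab=\Jab^2$). I would then fix times $t_1,\dots,t_m$ and compute the conditional characteristic function of the vector $(F^N(t_1),\dots,F^N(t_m))$, namely $\exp(-\tfrac12\theta^{\mathsf T}\Sigma_N\theta)$ with $(\Sigma_N)_{kl}=\tfrac{1}{N_\beta}\Jdab\sum_j S_\beta(V_j(t_k))S_\beta(V_j(t_l))$. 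The law of large numbers makes each entry converge almost surely to $\Jdab\,\mathbb{E}[S_\beta(V_\beta(t_k))S_\beta(V_\beta(t_l))]$, and since $S_\beta$ is bounded these entries stay uniformly bounded; bounded convergence then gives convergence of the unconditional characteristic function $\mathbb{E}[\exp(-\tfrac12\theta^{\mathsf T}\Sigma_N\theta)]$ to the Gaussian characteristic function with covariance $\Sigma$. By L\'evy's continuity theorem the finite-dimensional laws of $F^N$ converge to those of a centred Gaussian process with the covariance in \eqref{eq:effectiveInteractionProcessParams}, and combining this with the almost-sure convergence of $M^N$ to a constant through Slutsky's lemma delivers convergence in law of $U^N_{i\beta}$ to the stated process $U^{V}_{\alpha\beta}$.

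Finally, the vanishing of the cross-covariance encoded by $\mathbbm{1}_{\alpha=\gamma;\,\beta=\delta}$ follows from the same conditional computation: two interaction processes indexed by distinct neurons ($i\neq i'$) or distinct source populations ($\beta\neq\delta$) are built from disjoint, independent families of $\xi$'s, so the off-diagonal blocks of the conditional covariance matrix are identically zero and the limiting processes are independent copies. The step I expect to require the most care is the passage from convergence of moments to genuine convergence in law: it must go through the conditional characteristic function and bounded convergence as above rather than by matching moments, and one must keep in mind that the $V_j(\cdot)$ are i.i.d.\ copies of the \emph{whole} process $V_\beta(\cdot)$, so that the two-time expectations $\mathbb{E}[S_\beta(V_\beta(t))S_\beta(V_\beta(s))]$ are well defined; upgrading finite-dimensional convergence to weak convergence in path space would additionally need a routine tightness estimate, which the boundedness of $S_\beta$ makes available.
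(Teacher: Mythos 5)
Your proof is correct, but it takes a genuinely different route from the paper's. The paper treats $U^N_{i\beta}(t)=\sum_j J_{ij}S_\beta(V_j(t))$ directly as a sum of i.i.d.\ terms, invokes a functional central limit theorem (citing Pollard), and then identifies the limit by computing the first two moments of the sum (the covariance computation being relegated to an appendix, where the extra term $\frac{\Jbab^2}{N_\beta}\Cov(S_\beta(V_\beta(t)),S_\beta(V_\beta(s)))$ appears and is shown to vanish). You instead split each weight into its mean and a rescaled standard Gaussian, $J_{ij}=\Jbab/N_\beta+(\Jab/\sqrt{N_\beta})\xi_{ij}$, handle the drift by the strong law of large numbers, and observe that the fluctuation term is \emph{exactly} Gaussian conditionally on the voltages, so that no central limit theorem is needed at all -- only the law of large numbers applied to the empirical two-time covariance, followed by bounded convergence of the conditional characteristic function and Slutsky. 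This buys you two things the paper's argument glosses over: the summands $J_{ij}S_\beta(V_j(t))$ form a triangular array whose law changes with $N$, so the CLT invocation requires more care than the text suggests; and ``convergence of the two first moments'' does not by itself upgrade to convergence in law, whereas your characteristic-function route does. What the paper's approach buys in exchange is generality -- a CLT-based argument would survive non-Gaussian weight distributions, while yours leans on exact Gaussianity. Two minor points: your treatment of the cross-covariance should note explicitly that the only randomness shared between $U^N_{i\beta}$ and $U^N_{i'\delta}$ (the voltages) enters the limit solely through the drift, which becomes deterministic, so conditional independence of the $\xi$-blocks does yield unconditional independence in the limit; and the paper additionally establishes (in the same appendix) the asymptotic independence of the effective interaction process from the voltage process itself, which you do not address, though it is not part of the displayed statement.
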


For the sake of simplicity, for any P-dimensional stochastic process $X$, we introduce the notation $m_{\beta}^X(t) = \mathbb{E}[S_{\beta}(X_{\beta}(t))]$ and
\[
\Debx(t,s) \eqdef \mathbb{E}\Big[S_{\beta}(X_{\beta}(t))S_{\beta}(X_{\beta}(s))\Big]
\]

We now turn to the heuristic proof of this proposition. We consider that $N$ is large enough for the local chaos hypothesis to be valid. Under this assumption, the voltages are independent, identically distributed and independent of the individual connectivity weights $J_{ij}$. 

The microscopic interaction process \eqref{eq:MicroInter} is therefore the sum of independent identically distributed random variables. The functional central limit theorem applies to our regular case, following \cite{pollard:90} (very weak regularity conditions are necessary on the summed processes to get this convergence), provided the convergence of the two first moments of the sum. 

The problem therefore reduces to the computation of the limits of the mean and standard deviation of the microscopic interaction process when the number of neurons tends to infinity. First of all, the mean of the microscopic interaction process for 
$N$ large enough is constant, since we have
\begin{align*}
 \mathbbm{E}\Big(U^N_{\alpha \beta}(t)\Big) &= \mathbbm{E}\Big( \sum_{j=1}^{N_{\beta}} J_{ij}S_{\beta}(V_j(t))\Big) = \sum_{j=1}^{N_{\beta}} \mathbbm{E}\Big( J_{ij}\Big) \mathbbm{E}\Big(S_{\beta}(V_j(t))\Big) \\
 &=  \Jbab \mathbbm{E}\Big(S_{\beta}(V_{\beta}(t))\Big) 
\end{align*}
This constant value is thus the limit of the means of the sum of independent random variables, and therefore the mean of the limiting Gaussian process. 

Similar straightforward computations (see appendix \ref{append:Moments}) prove that the covariance of the microscopic interaction process converges to the expression given in the proposition. From the functional central limit theorem and the convergence of the two first moments of the microscopic interaction, we conclude to the convergence of this sequence of processes to the effective Gaussian process of mean and covariance given in the proposition. The statistical independence between this effective interaction process and the membrane potential $V$ is also proved in the same appendix \ref{append:Moments}.

As a direct consequence of this last proposition, the mean-field equation for the membrane potential processes can be derived.

\begin{proposition}
	Under the local chaos hypothesis, the network equations \eqref{eq:NetEq} converge in law towards the solution of the non-Markovian stochastic equation:
	\begin{equation}\label{eq:V_MFE}
		dV_{\alpha}(t) = \left( -\frac{1}{\tau_{\alpha}} \, V_{\alpha}(t) + \sum_{\beta=1}^P U^{V}_{\alpha\beta}(t) + I_{\alpha}(t)\right) \, dt + f_{\alpha}dW^{\alpha}_t.
	\end{equation}
	where the processes $(W_{\alpha}(t))_{t\geq t_0}$ are independent Wiener processes and $U^{V}(t) = (U_{\alpha\beta}^{V}(t);\; \alpha, \beta \in \{1,\, \ldots,\, P\})_{t}$ is the effective interaction process. 
\end{proposition}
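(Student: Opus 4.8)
The plan is to treat equation \eqref{eq:NetEq} as an Itô equation driven by the microscopic interaction process and to pass to the limit using the convergence already established in Proposition \ref{prop:Convergence}. First I would regroup the synaptic double sum into the processes $U^N_{i\beta}$ of \eqref{eq:MicroInter}, so that for a neuron $i$ of population $\alpha$ the network equation reads
\begin{equation*}
dV_i(t) = \left( -\frac{1}{\tau_{\alpha}}\,V_i(t) + \sum_{\beta=1}^P U^N_{i\beta}(t) + I_{\alpha}(t)\right)\,dt + f_{\alpha}\,dW_i(t).
\end{equation*}
This already has exactly the form of the target \eqref{eq:V_MFE}; the only discrepancy is that the finite-$N$ interaction $U^N_{i\beta}$ must be shown to be replaceable, in the limit, by the effective process $U^{V}_{\alpha\beta}$.

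Next I would invoke Proposition \ref{prop:Convergence}, which provides convergence in law of each $U^N_{i\beta}$ towards the Gaussian process $U^{V}_{\alpha\beta}$ with moments \eqref{eq:effectiveInteractionProcessParams}, together with the asymptotic independence of this limit from the driving noise $W_i$. The key analytic step is then a continuity argument for the solution map of the stochastic equation: since the equation is linear in $V$, the pathwise solution is an explicit continuous functional of the additive input process $\sum_{\beta} U^N_{i\beta} + I_{\alpha}$ and of the Wiener path $W_i$ (via a variation-of-constants formula with the deterministic kernel $e^{-t/\tau_{\alpha}}$). Because this map is continuous for the topology in which Proposition \ref{prop:Convergence} yields convergence, convergence in law of the inputs transfers to convergence in law of the solutions $V_i$ towards $V_{\alpha}$. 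Independence of the limiting Wiener processes $W_{\alpha}$ across populations is inherited from the independence of the $W_i$, and the local chaos hypothesis guarantees that all neurons of population $\alpha$ share the common limiting law.

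The delicate point, and the step I expect to be the genuine obstacle, is the \emph{self-consistency}, or fixed-point, character of \eqref{eq:V_MFE}. The process $U^{V}_{\alpha\beta}$ is not exogenous: its mean and covariance in \eqref{eq:effectiveInteractionProcessParams} are functionals of the law of $V_{\beta}$ through $m_{\beta}^{V}(t)=\mathbb{E}[S_{\beta}(V_{\beta}(t))]$ and $\mathbb{E}[S_{\beta}(V_{\beta}(t))S_{\beta}(V_{\beta}(s))]$, whereas $V_{\alpha}$ is itself the solution of the equation driven by $U^{V}_{\alpha\beta}$. One must therefore verify that the common law supplied by the local chaos hypothesis coincides with the one entering the definition of $U^{V}$, so that the coupled system closes. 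Heuristically this is the statement that propagation of chaos is consistent with the limit; rigorously it should be phrased as a fixed-point problem on the space of laws of continuous processes, with existence and uniqueness of the fixed point established separately (the route followed in \cite{faugeras-touboul-etal:09} through the Ben-Arous--Guionnet framework \cite{ben-arous-guionnet:97}). A secondary difficulty worth flagging is that the limit is non-Markovian: the covariance couples $U^{V}_{\alpha\beta}$ across all pairs of times $(t,s)$, so it is a process with memory, and the continuity argument above must be carried out pathwise on the realized Gaussian sample path rather than by appeal to standard Markovian stability theorems for SDEs.
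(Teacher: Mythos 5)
Your proposal follows essentially the same route as the paper: both rest on the variation-of-constants representation $V_{i}(t) = V_{\alpha}(0)e^{-t/\tau_{\alpha}} + \sum_{\beta} \int_0^t e^{(s-t)/\tau_{\alpha}} U^N_{\alpha\beta}(s)\,ds + f_{\alpha}\int_{0}^t e^{(s-t)/\tau_{\alpha}}\,dW^{\alpha}_s$ and transfer the convergence in law of $U^N_{\alpha\beta}$ (Proposition \ref{prop:Convergence}) to the integral term, hence to $V_i$. Your explicit flagging of the self-consistency/fixed-point issue is a point the paper's proof passes over silently (it simply names \eqref{eq:V_MFE_Integ} a ``stochastic fixed-point equation'' and defers rigor to \cite{faugeras-touboul-etal:09}), but this does not change the argument.
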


\begin{proof}
	The solution of the network equation \eqref{eq:NetEq} with initial condition $V(0)$ at $t=0$ can be written as:
	\[V_{i}(t) = V_{\alpha}(0)e^{-t/\tau_{\alpha}} + \sum_{\beta=1}^{P} \int_0^t e^{(s-t)/\tau_{\alpha}} U^N_{\alpha\beta}(s) \,ds + f_{\alpha}\int_{0}^t e^{(s-t)/\tau_{\alpha}} \, dW^{\alpha}_s.\]
	Because of the convergence in law of the microscopic interaction process to the effective interaction process, we have the convergence in law of the integral term $\int_0^t e^{(s-t)/\tau_{\alpha}} U^N_{\alpha\beta}(s) \,ds$ towards the effective term $\int_0^t e^{(s-t)/\tau_{\alpha}} U^V_{\alpha\beta}(s) \,ds$, and therefore, for any neuron $i$ of population $\alpha$, the potential converges in law towards the solution $V_{\alpha}$ of the stochastic fixed-point equation:
	\begin{equation}\label{eq:V_MFE_Integ}
		V_{\alpha}(t) = V_{\alpha}(0)e^{-t/\tau_{\alpha}} + \sum_{\beta=1}^{P} \int_0^t e^{(s-t)/\tau_{\alpha}} U^V_{\alpha\beta}(s) \,ds + f_{\alpha}\int_{0}^t e^{(s-t)/\tau_{\alpha}} \, dW^{\alpha}_s
	\end{equation}
	which is equivalent to equation \eqref{eq:V_MFE}.
\end{proof}

The two equivalent equations \eqref{eq:V_MFE} and \eqref{eq:V_MFE_Integ} are referred to as the mean-field equations. In the limit where the number of neurons tends to infinity, all the neurons have the same distributions, behave independently and for any neuron in population $\alpha$, its membrane potential is solution of this set of equations.

\begin{remark}
	All the derivations have been done  based on Amari's local chaos hypothesis, which is a non-rigorous assumption. However, the exact same result is obtained by rigorous mathematical methods~\cite{faugeras-touboul-etal:09,ben-arous-guionnet:95,guionnet:97}.
\end{remark}

We discuss the form of these mean-field equations and the existence and uniqueness of their solutions in the following section. 

\subsection{Analysis of the Mean-Field equations}\label{sec:anal}

The mean-field equations derived in the previous section are rather unusual equations. Indeed, they involve the effective interaction process term, which is a functional of the solution of the equation. More precisely, it is a Gaussian process whose law depends on the statistics of the solution of the equation in a non Markovian way. Therefore, this equation cannot be considered as a stochastic differential equation. It is an equation set in the space of stochastic processes, or equivalently an equation on the probability distribution of the mean-field solution. Similar issues were observed in the case of spin glasses \cite{ben-arous-guionnet:95}. The existence and uniqueness of solutions of the dynamic mean-field equation (i.e. starting from a Gaussian initial condition) is proved in \cite{faugeras-touboul-etal:09}, by considering the mean-field equation as a fixed point equation in the set of stochastic processes. Conditions are also given for a stationary solution to exist. 

The approach is based on the fact that solutions having a Gaussian initial condition $V(0)$ (or for stationary solutions) are necessarily Gaussian processes, and therefore characterizing the mean $\mu^V(t)$ and the correlation function $C^V_{\alpha\beta}(s,t)$ is sufficient to identify the solution of the mean-field equations. How do we compute these two deterministic functions? Let us start by writing the equations they satisfy. By straightforward computations on equations \eqref{eq:V_MFE} and  \eqref{eq:V_MFE_Integ}, we obtain the equations for the mean $\mu^V(t)$ and the correlation $C_{\alpha\beta}(s,t)$ for $s\leq t$:
\begin{equation}\label{eq:MFE_Moments}
	\begin{cases}
	\der{\mu^V_{\alpha}}{t} &= -\frac{1}{\tau_{\alpha}}\mu^V_{\alpha}(t)
	+ \sum_{\beta=1}^P \Jbab  \int_{-\infty}^{+\infty} S_{\beta} \left ( x\sqrt{C_{\beta\beta}^V(t,t)} + \mub^V(t) \right) Dx
	+ \Ia(t),\\
	C_{\alpha \alpha}^V(t,s) & =e^{-(t+s)/\ta}\Big[\textrm{Var}(V_{\alpha}(0))+ \frac{\ta f_{\alpha}^2}{2}\left(e^{\frac{2s}{\ta}}-1\right) \\ 
	& \quad+ \sum_{\beta=1}^P \Jdab \int_{0}^t\int_{0}^s e^{(u+v)/\ta}\Delta_{\alpha\beta}^V(u,v)dudv\Big],\\
	C_{\alpha \beta}(s,t) & \equiv 0 \qquad \qquad \text{for }\; \alpha \neq \beta
\end{cases}
\end{equation}
where $Dx = \exp(-x^2/2)/\sqrt{2\pi}\,dx$ is the standard Gaussian measure and 
\begin{multline}\label{eq:delta}
	\Deabx(u,v) = \int_{\bbbr^2} S_{\beta}\left( x\frac{\sqrt{C_{\beta\beta}^X(u,u)\,C_{\beta\beta}^X(v,v) - C_{\beta\beta}^X(u,v)^2}}{\sqrt{C_{\beta\beta}^X(v,v)}}
	+y\frac{C_{\beta\beta}^X(u,v)}{\sqrt{C_{\beta\beta}^X(v,v)}} +\mu^X_\beta (u)\right) \\
 \quad \times S_{ \beta}\left(y \sqrt{C_{\beta\beta}^X(v,v)}+\mu^X_\beta (v)\right)\,Dx\,Dy\\
\end{multline}
 
In the more general setting of \cite{faugeras-touboul-etal:09}, it is proved that these equations have a unique solution that can be computed via the iteration of a map $\mathcal{F}$ defined on the set of continuous mean and covariance functions. This function transforms a Gaussian process $X$ with mean $\mu^X$ and covariance $C^X$ into the Gaussian process $Y=\mathcal{F}(X)$ with mean $\mu^Y$ and covariance $C^Y$ given by:

\begin{align}
	 \nonumber\muay(t) &=\muax(0)e^{-t/\ta}+\int_{0}^t e^{-(t-s)/\ta}(\sum_{\beta=1}^P \Jbab \Exp{S_{ \beta}(X_{\beta}(s))}+\Ia(s))ds \\
\nonumber & = \muax(0)e^{-t/\ta}+\int_{0}^t e^{-(t-s)/\ta} \Ia(s)ds  \\
\label{eq:moyenne} & \qquad + \sum_{\beta=1}^P \Jbab \int_{0}^t  e^{-(t-s)/\ta}
\int_{-\infty}^{+\infty} S_{\beta} \left ( x\sqrt{\vbx(s)} + \mub^X(s) \right) Dx ds.
\end{align}
where we denoted $\vax(s)$ the standard deviation of $X_\alpha$ at time $s$, instead of $\Cax(s,s)$, and 
\begin{multline}\label{eq:Variance}
\Cay(t,s)=e^{-(t+s)/\ta}\Big[v_\alpha^X(0)+ \frac{\ta\sa^2}{2}\left(e^{\frac{2s}{\ta}}-1\right) \\ 
+ \sum_{\beta=1}^P \Jdab \int_{0}^t\int_{0}^s e^{(u+v)/\ta}\Deabx(u,v)dudv\Big],
\end{multline}
where $\Deabx$ is defined in equation \eqref{eq:delta}.

These equations take a particularly simple form in the case where the firing-rate sigmoidal functions are $\erf$ functions, where $\erf(y)=\int_{-\infty}^y Dx$ is the repartition function of the standard Gaussian distribution. Let us consider that $S_{\alpha}(x) = \erf(g_{\alpha}x+\gamma_{\alpha})$ where $\erf$ is the repartition function of the Gaussian. In that case (see appendix \ref{append:Erf} for the calculations), the mean-field equations take the form:
\[\der{\mu_{\alpha}}{t} = -\frac{\mu_{\alpha}}{\tau_{\alpha}} + \sum_{\beta=1}^{P}\Jbab \erf\left(\frac{g_{p(i)}\, \mu_\beta(t) + \gamma_{p(i)}}{\sqrt{1+g^{2}v_\beta(t)}}\right)\]
and the equation governing the standard deviation of the process reads:
\begin{multline}
v_\alpha(t) = e^{-2t/\tau_{\alpha}}  \Big [ v_\alpha(t_0) + \frac{\tau_{\alpha}f_{\alpha}^{2}} {2} (e^{2t/\tau_{\alpha}} - 1 ) +
\sum_{{\beta=1}}^{P} \sigma_{\alpha \beta}^{2} \int_{t_{0}}^{t}\int_{t_{0}}^{t} e^{(u+v)/\tau_{\alpha}}\\
\int_{\R} S(\sqrt{v_\beta(t)} y + \mu_{\beta}(t)) \,S\left ( \frac{C_{\beta\beta}(t,s) y + \mu_{\beta}(s)\sqrt{v_\beta(t)}} {v_\beta(t) + g^{2} (v_\beta(t)v_\beta(s)-C_{\beta \beta}(t,s)^2)}\right)Dy.
\Big]
\end{multline}

These simpler equations allow for  more efficient numerical simulations, and an easier analytical treatment, and will be discussed in the Results section. 

\subsection{Numerical simulations algoritm}

The mean-field equations can be seen as a fixed point equation in the space of stochastic processes, or equivalently as a fixed point equation on the mean and the covariance function. In \cite{faugeras-touboul-etal:09}, the authors show that this solution can be obtained through the iteration of a function that operates on the mean and on the covariance function. This is the approach we chose to numerically compute the solutions of the mean-field equations. 

\subsubsection{Description of the algorithm}
In details, let $X$ be a $P$-dimensional Gaussian process of mean 
$\mu^X = (\muax(t))_{\alpha=1\ldots P}$ and 
covariance $C^X = (C^X_{\alpha\beta}(s,t))_{\alpha, \beta \in \{1\ldots P\}}$. We use the map $\mathcal{F}$ which makes the mean-field equation a fixed-point equation, as described in section \ref{sec:anal}.

From equation \eqref{eq:moyenne}, we can see that knowing $\vax(s), s \in [0,t]$ we can compute $\muay(t)$ using a standard discretization scheme of the integral, with a small time step compared with $\ta$ and the characteristic time of variation of the input current $\Ia$. Alternatively, since $\muay$ satisfies the differential equation:
\begin{equation*}
 \frac{d\muay}{dt} =-\frac{\muay}{\ta}  
+ \sum_{\beta=1}^P \Jbab  \int_{-\infty}^{+\infty} S_{\beta} \left ( x\sqrt{\vbx(t)} + \mub^X(t) \right) Dx
+ \Ia(t),
\end{equation*}
we can compute faster and with a better accuracy the solution using a Runge-Kutta algorithm.

The covariance variable of the image of $\mathcal{F}$ (from equation \eqref{eq:Variance}) can be split into the sum of two terms: the external noise contribution $\Ca^{OU}(t,s)=e^{-(t+s)/\ta}\left[\vax(0)+ \frac{\ta\sa^2}{2}\left(e^{\frac{2s}{\ta}}-1\right)\right]$, where OU stands for Ornstein-Uhlenbeck, and the interaction between the neurons. The external noise contribution is a simple function and can be computed straightforwardly. To compute the interactions contribution to the standard deviation
we have to compute  the symmetric function of two-variables:
\[ \Habx(t,s)=e^{-(t+s)/\ta} \int_0^t\int_0^s e^{(u+v)/\ta}\Deabx(u,v)dudv,\]
from which one obtains the standard deviation using the formula
\begin{equation*}
\Cay(t,s)=\Ca^{OU}(t,s)+\sum_{\beta=1}^P \Jdab \Habx(t,s).
\end{equation*}
To compute the function $\Habx(t,s)$, we start from $t=0$ and $s=0$, where $\Habx(0,0)=0$. We only compute $\Habx(t,s)$ for $t>s$ because of the symmetry. It is straightforward to see that:
\begin{equation*}
\Habx(t+dt,s)=\Habx(t,s)\left[1-\frac{dt}{\ta} \right]+\Dabx(t,s)dt + o(dt),
\end{equation*}
with
\begin{equation*}
\Dabx(t,s)= e^{-s/\ta}\int_0^s e^{v/\ta}\Deabx(t,v)dv.
\end{equation*}
Hence computing $\Habx(t+dt,s)$ knowing $\Habx(t,s)$ amounts to computing $\Dab(t,s)$. Fix $t\geq 0$. We have $\Dab(t,0)=0$ and 
\begin{equation*}
\Dabx(t,s+ds)=\Dabx(t,s)(1-\frac{ds}{\ta})+\Deabx(t,s)ds + o(ds).
\end{equation*}
This algorithm enables us to compute $\Habx(t,s)$ for $t>s$. We deduce $\Habx(t,s)$ for $t<s$ using the symmetry of this function. Finally, to get the values of $\Habx(t,s)$ for $t=s$, we use the symmetry property of this function and get:
\begin{equation*}
\Habx(t+dt,t+dt)=\Habx(t,t)\left[1-\frac{2dt}{\ta} \right]+2\Dabx(t,t)dt + o(dt).
\end{equation*}

\subsubsection{Analysis of the algorithm}
\paragraph{Convergence rate}
Using the fact that the iterations of the algorithms produce a Cauchy sequence and using evaluations done in the proofs of existence and uniqueness of solutions provided in \cite{faugeras-touboul-etal:09}, one can show that the precision of the algorithm is given by 
\begin{equation}\label{eq:CVAlgo}
  \|C^{{\mathcal{F}^N}(X)} - C^{X_{MF}} \|_{\infty} \leq C_2\,(N+T-t_0)\, dt+ R_N(\tilde{k})
\end{equation}
 where $\mathcal{F}^N(X)$ is the $N$th iterate of the map $\mathcal{F}$ on the initial process $X$ and $X_{MF}$ the unique fixed point of $\mathcal{F}$, i.e. the mean-field solution. In equation \eqref{eq:CVAlgo}, $C_2$ denotes a constant depending on the parameters of the model, $R_N(x)$ is the exponential remainder, i.e. $R_N(x) = \sum_{n=N}^{\infty} x^n/n!$ and $\tilde{k}$ is some constant depending on the parameters of the model. 

\paragraph{Complexity}
The complexity of the algorithm depends on the complexity of the computations of the integrals. The algorithm described hence has the complexity $O(N_{\text{iter}} (\frac{T}{dt})^2)$ where $N_{\text{iter}}$ is the iteration number of the algorithm.

\section{Results}
In this section, we compare the results provided by the present approach to a variety of deterministic and stochastic approches. We will be particularly interested in comparing the solutions of the present mean-field equation to the deterministic Wilson and Cowan equations~\cite{wilson-cowan:72,wilson-cowan:73}:
 \[\der{V_{\alpha}}{t} = -\frac{1}{\tau_{\alpha}} \, V_{\alpha}(t) + \sum_{\beta=1}^P \Jbab S_{\beta}(V_{\beta}(t))\]

If $C_{\beta \beta}(t,t) \equiv 0$ for all $t \in \R$, then the equation the mean-field equations \eqref{eq:MFE_Moments} reduces to:
\begin{equation*}
 \frac{d \mu_\alpha(t)}{dt}= 
-\frac{\mu_\alpha(t)}{\tau_{\alpha}}+\sum_{\beta=1}^P \Jbab
S_\beta\left(\mu_\beta(t)\right) +I_\alpha(t),
\end{equation*}
which is precisely the customary Wilson and Cowan equations corresponding to the system where no randomness is considered neither in the input nor in the coefficients. This is precisely the limit for $f_{\alpha}=0$ and $\Jdab=0$ for all $\alpha$ and $\beta$ of the mean-field equations. Therefore, the Wilson and Cowan equations can be seen as the deterministic limit of the general mean-field equations presented in this paper. Moreover the solutions of the general mean-field equations in the small noise case converge to the solutions of these deterministic equations. This convergence is to be taken in a weak sense, since the conditions for existence and uniqueness of the solution of the mean-field equations are not satisfied in the case where no noise is present (the setting for existence and uniqueness necessitates to stay in the space of continuous mean and covariance functions).

However, when noise is considered, the mean of the solutions of the mean-field equations differs from the solutions of the Wilson and Cowan's system, and non-trivial differences appear. The precise study of the differences between the mean-field system and customary approaches is still an active ongoing research field, but evidences of non-trivial effects are already found. First of all, in the one-dimensional case, we obtained numerically that presence of noise can delay the apparition of bifurcations, stabilizing fixed points that were unstable in the corresponding Wilson and Cowan equation (see figure \ref{fig:Pitch}). This numerical observation is under
\begin{figure}
	\centering
		\includegraphics[width=.5\textwidth]{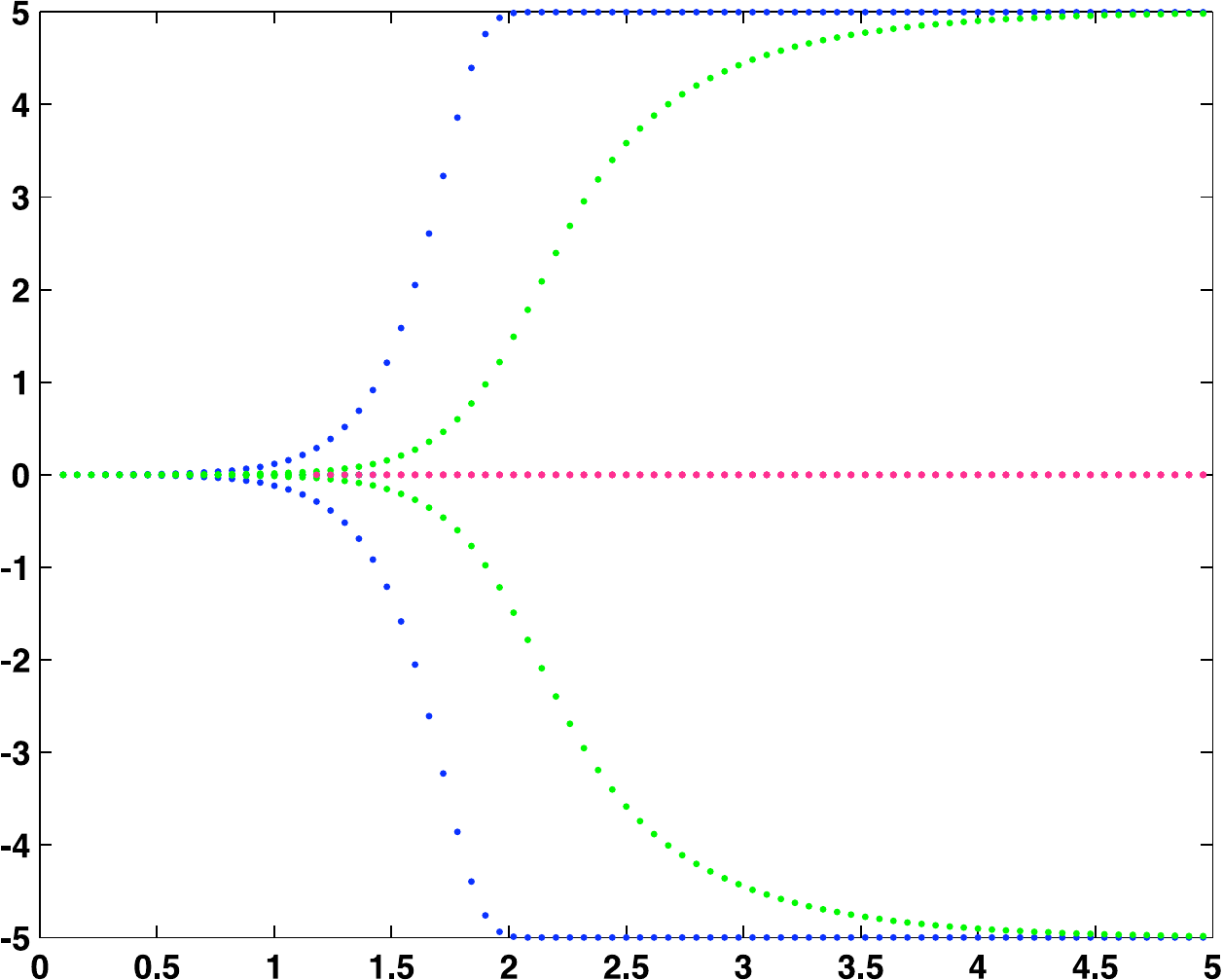}
	\caption{Delayed pitchfork bifurcation numerically simulated in a one population network. We plotted the value of the mean of the process on an interval $[T_1,T_2]$ with $T_1$ sufficiently large, as a function of the slope $g$ of the sigmoid $S(x)=1/(1+exp(-g\,x))$}
	\label{fig:Pitch}
\end{figure}
analytical investigation. Other non-trivial effects include transitions between a deterministic and a chaotic behavior, similar to the ones predicted analytically by Crisanti, Sompolinsky and colleagues  \cite{sompolinsky-crisanti-etal:88,crisanti-sommers-etal:90}, and that we study in section \ref{sec:Sompo}. Another non-trivial effect is the influence of the correlations in the oscillatory regime for a two populations case, presented in \ref{sec:Oscill}. 

\subsection{Stationary solutions in the one population case}\label{sec:Sompo}
The mean-field equations \eqref{eq:V_MFE} provide another approach to the analytical results of the extensive work of Crisanti, Sompolinsky and colleagues \cite{sompolinsky-crisanti-etal:88,crisanti-sommers-etal:90}. In these papers, the authors use a generating functional approach to derive the mean-field equations the system satisfies. The equations they obtain are very similar to ours, in the one population case. Hence so far we have exactly the same equations and formalism. They further simplify the problem by considering stationary solutions. In this particular case, the necessary conditions to get a stationary solutions to the mean-field equations stated in \cite{faugeras-touboul-etal:09} are satisfied, namely, since the time constants of evolution for each population and the standard deviation of the noise are constant, and what they call the leak matrix $L$, here the diagonal matrix with diagonal element $-1/\tau_{\alpha}$, has full rank and strictly negative eigenvalues. The stationary solutions can therefore be written as:
\begin{equation}\label{eq:LTMFE}
 V_{\alpha}(t) = I_{\alpha} + \sum_{\beta=1}^P \int_{-\infty}^t e^{-(t-s)/\tau_{\alpha}} U_{\alpha\beta}^{V}(t)\, dt + \sigma_{\alpha} \int_{-\infty}^t e^{-(t-s)/\tau_{\alpha}} \,dW_t^{(\alpha)}
\end{equation}
In the stationary case, the mean is constant, and the covariance $C_{\alpha\alpha}(t,s)$ only depends on the time difference $\tau = \vert t-s \vert$. They then write the fixed point equation on the covariance as a second order differential equation. This equation can be written as the mouvement of a particle in a potential well, this potential depends on a non-free parameter, $q$, which is equal to the standard deviation of the process (for a time difference $\tau=0$), namely $C(0)$. Because of the structure of stationary solutions, this parameter depends on the whole past of the solution on $(-\infty, 0)$, and the equation they write, of type:
\begin{equation}\label{EDOSomp}
\frac{d^2 C}{d\tau^2}=-\frac{\partial V_{q}}{\partial C}.
\end{equation} 
where $V_q$ is the potential depending on $q=C(0)$, remains a functional equation on the covariance, that do not differ from ours. Note that this dependency illustrates the non-Markovian nature of the problem. 

At this point, the authors treat the problem as if $q$ were a free parameter, which allows them to study the phase portrait of these equations and distinguish two regimes depending on the maximal slope $g$ of the sigmoidal firing function: for small values of $g$, a deterministic regime where the covariance has a unique fixed point equal to $0$, and for larger values of $g$, a homoclinic
trajectory connecting the point $q=C^\ast>0$ where
$V_q$ vanishes to the point $C=0$. The authors show that this is the only stable solution in the system, and interpret this solution as a chaotic solution in the neural network. Moreover, this solution can be found using energy conservation, and Taylor expansion of $V_q$ can be provided. These two predicted solutions are found by the simulation of the full mean-field system, as illustrated in Figure~\ref{fig:SompoCase}. In this figure, we represent the correlation function $C(s,t)$ as a function of $\tau=t-s$. Hence for each value of $\tau$ correspond several correlation values
\begin{figure}
	\centering
		\includegraphics[width=.9\textwidth]{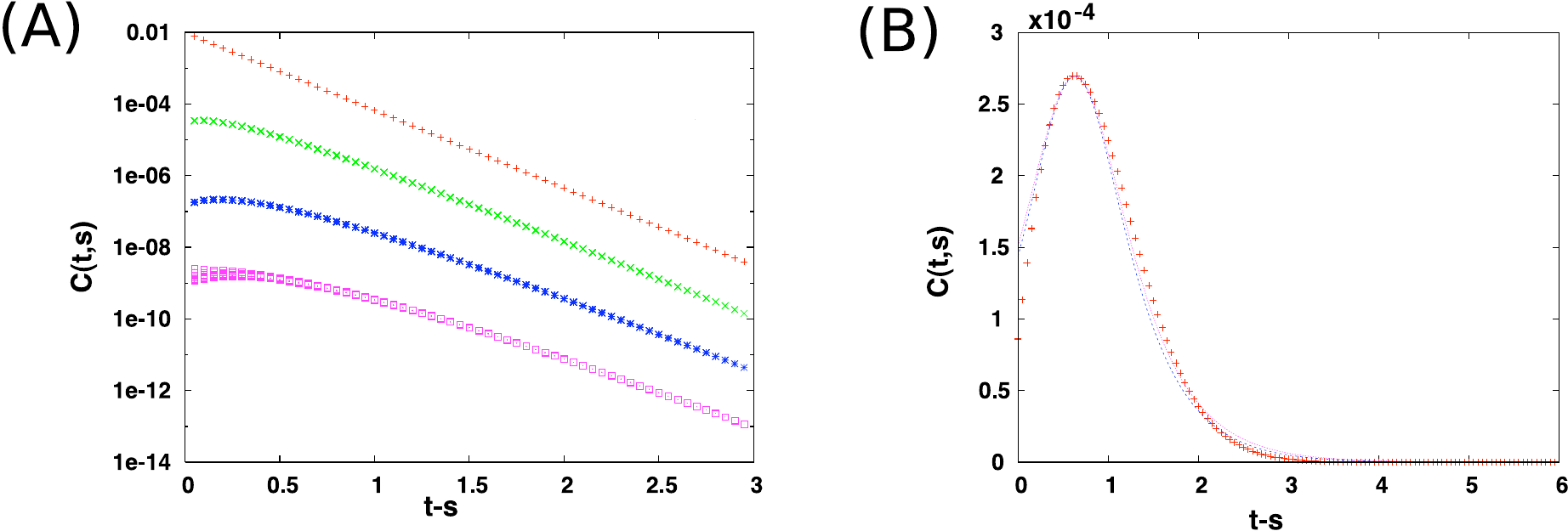}
	\caption{Simulations of the correlation equation in the stationary case. $\tau_1=0.25$, $\sigma_{1,1}=1$,  $S(x)=\tanh(g\,x)$, and $\bar{J}=1$. (A) Deterministic case: $g=0.5$. The graph shows the correlation for 1 (red), 2 (green), 3 (blue) and 4 (purple) iterations of the map $\mathcal{F}$ in a semi-log scale, and illustrate the convergence of the iterations towards zero as predicted by Sompolinsky et al.. (B) Chaotic case: $g=5$. We observe after convergence of the algorithm that the covariance function is non-vanishing (red crosses), and is well approximated by a fourth-order Taylor expansion of the theoretical solution (blue dotted curve) and even better by the sixth order expansion (purple curve). }
	\label{fig:SompoCase}
\end{figure}
which are superposed if the solution is stationary (resulting in a curve). These numerical results are compared to Sompolinsky's predictions and to the Taylor expansion to the fourth and sixth order. The authors of \cite{sompolinsky-crisanti-etal:88,crisanti-sommers-etal:90} predict that there exist a critical value of the slope of the sigmoid function $S$, denoted $g_c$, at which the system presents a sharp transition. This value in the numerical case we treat in figure~\ref{fig:SompoCase} is $g_c=4$. For any $g$ smaller than the critical value $g_c$, the system will present a deterministic behavior, with a covariance function identically equal to $0$. They call this solution the trivial case, and in that case the solutions of the mean-field equations reduce to the solutions of Wilson and Cowan system. Though they do not prove that the system indeed converges to this solution, they show that it exists when the parameter $q$ is free, this solution exists. We numerically compute, using the algorithm presented and integrating from a very large negative time, that indeed this solution is the one that is selected by the system, i.e. the only solution to the stationary mean-field equation. Similarly, for $g>g_c$, the covariance of the system converges towards the convergence corresponding to the homoclinic orbit of equation \eqref{EDOSomp} corresponding to the case where the parameter $q=C(0)$ is free. Therefore, the behaviors predicted by the studies of Crisanti, Sompolinsky and colleagues are confirmed by our numerical approach, and their approximation is therefore \emph{a posteriori} validated.

\subsection{Oscillations in a two-populations network}\label{sec:Oscill}

Let us now present a case  where the fluctuations of the Gaussian field 
act on the dynamics of $\mua(t)$ in a non trivial way, with a behavior departing
from the naive mean-field picture.
We consider  two interacting populations where the connectivity weights are Gaussian random variables $J_{\alpha\beta} \equiv \mathcal{N}(\Jb_{\alpha\beta}, \sigma)$  for $(\alpha,\beta) \in \{1,2\}^2$.
We set $S_{\beta}(x)=\tanh(g\,x)$ and $\Ia=0,f_\alpha=0, \ta=\tau,\; \alpha=1,2$. Note that in that case, the nonlinear function $S(x)$ takes negative values. This case has a theoretical interest in the sense that analytical calculations can be pursued, which provides a good understanding of the mechanisms. Note that this case corresponds to a case where the sigmoidal transform is positive, by considering that $S(x)=\tanh(g\,x)+1$ (which is always positive), and for which the current $I_{\alpha}$ is now no more constant but equal to the random variable  $-J_{\alpha \beta}$. All the computations can be done similarly when the current is a random variable. The dynamic mean-field equation for $\mua(t)$ is given, in differential form, by:
 \begin{equation*}
 \frac{d\mua}{dt}=-\frac{\mua}{\ta}+\sum_{\beta=1}^2 \Jbab \int_{-\infty}^\infty S\left(\sqrt{v_\beta(t)}x+\mub(t)\right) Dx,\ \alpha=1,2.
 \end{equation*}
Let us denote by $G_\alpha(\mu,C(t))$ the function in the righthand side of the equality. Since $S$ is odd, $\int_{-\infty}^\infty S(\sqrt{v_\beta(t)}x) Dx =0$. Therefore, we have $G_\alpha(0,C(t))=0$ whatever $C(t)$, and hence the point $\mu_1=0, \mu_2=0$ is always a fixed point of this equation. Let us study the stability of this fixed point. To this purpose, we compute the partial derivatives of $G_{\alpha}(\mu,C(t))$ with respect to $\mub$ for $(\alpha, \beta)\in\{1,2\}^2$. We have:
\[\frac{\partial G_\alpha}{\partial \mub}(\mu,C(t))= -\frac{\delta_{\alpha\beta}}{\ta}+g \Jbab \int_{-\infty}^\infty \left(1-\tanh^2\left(\sqrt{v_\beta(t)}x+\mub(t)\right)\right) Dx,\] 
and hence at the point $\mu_1=0, \mu_2=0$, these derivatives read:
  \[\frac{\partial G_\alpha}{\partial \mub}(0,C(t))=-\frac{\delta_{\alpha\beta}}{\ta}+g \Jbab h(v_\beta(t)),\] 
where $h(v_\beta(t))=1-\int_{-\infty}^\infty \tanh^2(\sqrt{v_\beta(t)}x) Dx$.

In the case $v_\alpha(0)=0,\sigma=0,f_{\alpha}=0$, implying $v_\alpha(t)\equiv 0$ for $t \geq 0$, and the equation for
 $\mua$ reduces to:
 \begin{equation*}
 \frac{d\mua}{dt}=-\frac{\mua}{\ta}+\sum_{\beta=1}^2 \Jbab S(\mub(t))
 \end{equation*}
which is the standard Wilson and Cowan system where Gaussian fluctuations are neglected. 
In this case the stability of the fixed point $\mu=0$ is given by the sign of the largest eigenvalue of the Jacobian matrix of the system that reads:
\[\left (\begin{array}{cc} -\frac{1}{\tau_1} & 0 \\ 0 & -\frac{1}{\tau_2} \end{array}\right)+ g\left(\baR{ccc} \Jb_{11}&\Jb_{12}\\ \Jb_{21}&\Jb_{22} \eaR\right).\]
The eigenvalues  are in this case $-\frac{1}{\tau}+g\lambda_{1,2}$,
where $\lambda_{1,2}$ are the eigenvalues of $\bar{\cJ}$
and have the form:
\[\lambda_{1,2}=\frac{\Jb_{11}+\Jb_{22} \pm \sqrt{(\Jb_{11}-\Jb_{22})^2+4 \Jb_{12}\Jb_{21}}}{2}.\]
 
 Hence, they are complex whenever $\Jb_{12}\Jb_{21}< -(\Jb_{11}-\Jb_{22})^2/4$, corresponding
 to a negative feedback loop between population 1 and 2. In that case 
 they have a nonzero real part only if $\Jb_{11} + \Jb_{22} \neq 0$  (self interaction).
 This opens up the possibility to have an instability of the fixed point ($\mu=0$) leading to a
regime  where the average value of the membrane potential
 oscillates. This occurs if
   $\Jb_{11}+\Jb_{22}>0$  and if $g$ is larger than:
 \begin{equation*}
 g_c=\frac{2}{\tau(\Jb_{11}+\Jb_{22})}.
 \end{equation*}
The corresponding bifurcation is a Hopf bifurcation.

The presence of this bifurcation and these oscillations depends on the presence of fluctuations of the Gaussian field. These oscillations can simply disappear, as we study in depth in a forthcoming paper. Indeed, similarly to the case of the pitchfork bifurcation (Figure~\ref{fig:Pitch}), the covariance of the noise will globally have the effect to delay, or event destroy the Hopf bifurcation, stabilizing the fixed point $\mu=0$. 

We show here that in a case where the cycle is conserved, the correlation will interact with the mean in an intricate fashion. We observe that the correlations present oscillations four times faster than the mean. The mean asymptotically oscillates at the same frequency as Wilson and Cowan system, and in opposition of phase. 

\begin{figure}[htb]
\centerline{\includegraphics[width=.7\textwidth]{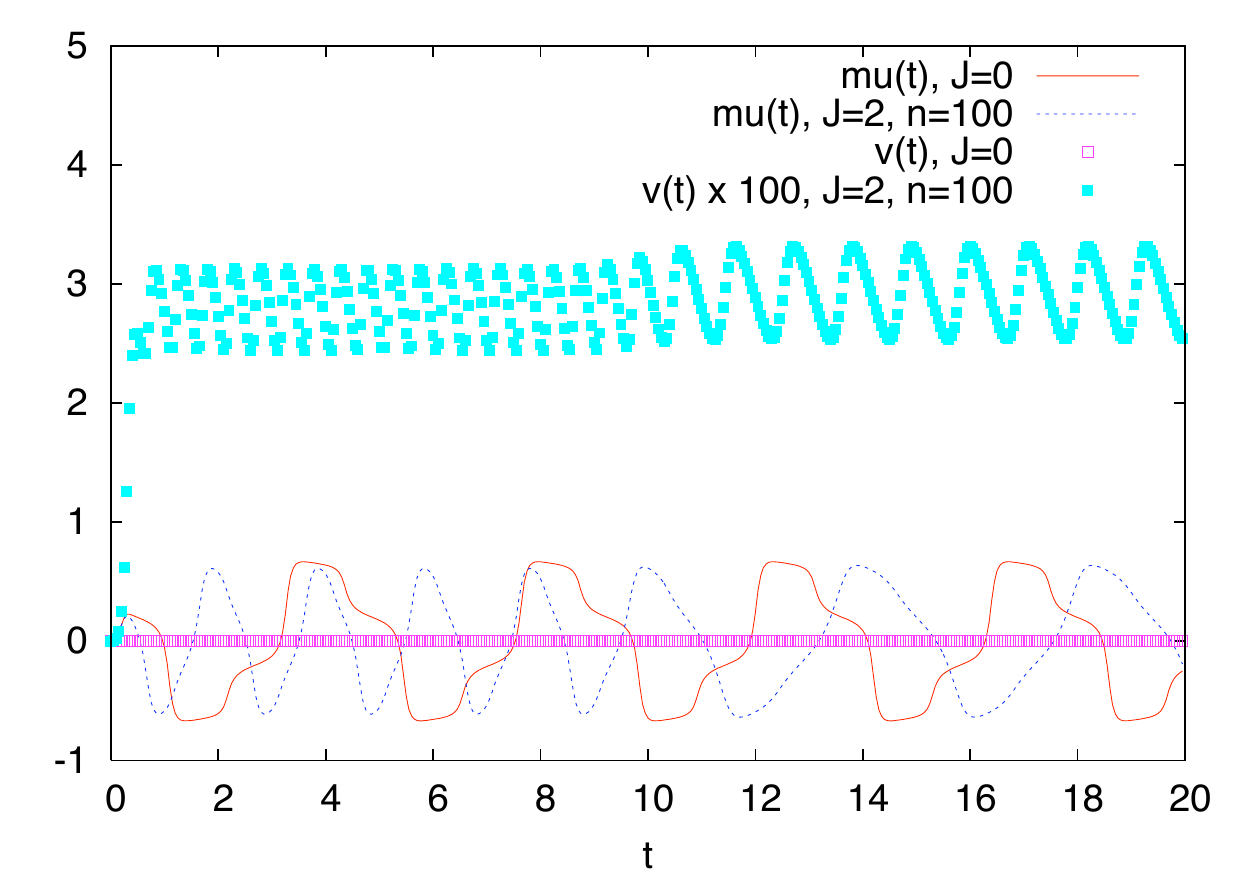}}
\caption{Evolution of the mean $\mu_1(t)$ and variance $v_1(t)$ for the mean-field of population
$1$, for $J=0$ and $J=2$, over a time window $[0,20]$. $n$ is the number of iterations the map $\mathcal{F}$. This corresponds to a number of iterations for which the method has essentially converged (up to some precision). Note that $v_1(t)$ has been magnified by a factor of $100$. Though Gaussian fluctuations are small, they have
a strong influence on $\mu_1(t)$.}
\label{F2popmu}
\end{figure}

\section{Conclusion}
We reviewed a recent approach to the mean-field limits in neural networks that takes into account the stochastic nature of input currents at each level as well as the uncertainty in synaptic coupling. One of the main advantages of this approach is that it can be rigorously derived from first principles using advanced mathematical tools in the field of probability, such as large deviation techniques and fixed points in the set of stochastic processes. However, in the simpler case treated here, the solutions are Gaussian and therefore described by the mean and covariance functions. Equations on the mean and the standard deviation are written in the simplest possible way, and a simulation algorithm is provided. 

These equations are characterized by a strong interplay between the mean and the covariance functions, that makes the solutions deviate from the customary Wilson and Cowan system. This coupling between the mean and the covariance was recently investigated independently in mean-field limits of spiking neurons using a Markovian formalism, but the equations involved appear to be simpler, and described by ordinary differential equations \cite{buice-cowan:09,bressloff:09}. The more complex integral equations that we obtain here originate from the complex memory effects that arise in mean-field limits; this has already been observed and discussed in other fields, for instance in the case of spin glasses and in the queueing theory. This phenomenon explains why the equations obtained are not Markovian. The links between the present mean-field equations and the mean-field equations studied by Bressloff, Buice, Cowan and colleagues is currently an important area of investigation. Another perspective is the precise study of mean-field equations and of how the results depart from Wilson and Cowan's system. In particular, calculations can be developed in a few particular cases, and this phenomenon provides a first approach to bifurcations in stochastic systems, which is a great endeavor in mathematics, physics and biology today. 

\appendix
\section{Convergence of the moments of the microscopic interaction process}\label{append:Moments}
In the proof of proposition \ref{prop:Convergence}, we omitted for the sake of compactness the calculations necessary to identify the covariance of the effective interaction process and its asymptotic independence with respect to the voltage process. The simple yet heavy computations are provided here. 

We start by computing the covariance between the random variables $U^N_{\alpha \beta}(t),\,U^N_{\gamma,\delta}(s)$ for $t,s>0$ and $(\alpha, \beta,\gamma,\delta) \in \{1,\ldots,P\}^4$, assuming that $N$ is big enough so that Amari's local chaos hypothesis applies. In that case, we have: 
\begin{align*}
 &\Cov\left(U^N_{\alpha \beta}(t),\,U^N_{\gamma,\delta}(s)\right) \\
 &= \mathbbm{E} \Bigg( \left\{\sum_{j=1}^{N_{\beta}} J_{i,j}S_{\beta}(V_j(t)) - \frac{\Jbab}{N_{\beta}} m_{\beta}(t)\right\}
\times \left\{\sum_{k=1}^{N_{\delta}} J_{l,k}S_{\delta}(V_k(t)) - \frac{\Jbcd}{N_{\delta}} m_{\delta}(t)\right\}\Bigg)\\
 &= \mathbbm{E} \Bigg( \left\{\sum_{j=1}^{N_{\beta}} (J_{i,j} - \frac{\Jbab}{N_{\beta}}) S_{ \beta}(V_j(t)) +  \frac{\Jbab}{N_{\beta}} (S_{ \beta}(V_j(t)) - m_{\beta}(t) )\right\} \\
 & \qquad \qquad \times \left\{\sum_{k=1}^{N_{\delta}} (J_{l,k}- \frac{\Jbcd}{N_{\delta}} ) S_{\delta}(V_k(t)) +\frac{\Jbcd}{N_{\delta}} (S_{\delta}(V_k(t)) - m_{\delta}(t)) \right\}\Bigg)
\end{align*}
This product involves four types of terms. However, because of the independence of all involved processes, this sum simply reads:
\begin{align*}
	&\mathbbm{1}_{\alpha=\gamma, \beta=\delta} \left[\Jdab \mathbbm{E}\Big(S_{ \beta}(V_{\beta}(t))S_{\beta}(V_{\beta}(s))\Big) + \frac{\Jbab^2}{N_{\beta}} \Cov(S_{\beta}(V_{\beta}(t)),S_{\beta}(V_\beta(s))) \right]\\
	&\xrightarrow[{N\to \infty}]{} \;\mathbbm{1}_{\alpha=\gamma, \beta=\delta}\;\Jdab\; \Delta_{\beta}^V(t,s)
\end{align*}

We therefore proved the existence of a limit to the covariance of the processes, which proves the convergence of the microscopic interaction process to the effective interaction process, by application of the functional central limit theorem. 

In order to prove that the effective interaction process is independent of the membrane potential $V$, we use the fact that both processes are Gaussian, and therefore that independence is equivalent to a null correlation between the two processes. Let us consider a neuron $i$ of class $\gamma$, $\alpha, \beta \in \{1,\ldots, P\}$, and $k$ a neuron of population $\alpha$. We have:

\begin{align*}
 & \Cov(V_i(t), U^N_{\alpha, \beta}(s)) = \Cov( V_i(t), \sum_{j=1}^{N_{\beta}} J_{k,j} S_{\beta}(V_j(s))) \\
 & \quad = \sum_{j=1}^{N_{\beta}} \mathbbm{E}\left( \left(V_i(t)-\mu_i(t)\right) \left(J_{kj} S_{\beta}(V_j(s)) - \frac{\Jbab}{N_{\beta}} m_{\beta}^V(s) \right)\right) \\
 &\quad  = \mathbbm{1}_{\alpha=\gamma} \frac{\Jbab}{N_{\beta}} \Cov(V_\alpha,S(V_\alpha)).
\end{align*}
which tends to zero when the number of neurons tends to infinity since the covariance can be easily proved to be bounded. 

\section{Case of $\erf$ firing-rate transformations}\label{append:Erf}
In this appendix we provide the computations leading to simplify the mean-field equations when the sigmoidal functions are erf functions $S_{i}(x) = \erf(g_{p(i)}x+\gamma_{p(i)})$. In that case, the mean equation \eqref{eq:moyenne}:

\begin{align*}
  \mathbbm{E}(g_{\alpha}(X_{\alpha}))(t)&=\int_{\R} erf\left(g_{p(i)}\left(x \sqrt{v_{\beta}(t)}+\mu_\beta(t)\right)+\gamma_{p(i)}\right) \frac{e^{-x^{2}/2}}{\sqrt{2\pi}}\,dx\\
  & = \int_{\R}\int_{-\infty}^{g_{p(i)}\left(x \sqrt{v_{\beta}(t)}+\mu_\beta(t)\right)+\gamma_{p(i)}}\frac{e^{-(x^{2}+y^{2})}}{2\pi} dx dx'
\end{align*}

Let us write this equation with generic boundaries
\[\int_{\R}\int_{-\infty}^{a\,x +b}\frac{e^{-(x^{2}+y^{2})/2}}{2\pi} dx dy\]
The integration domain has therefore an affine shape as plotted in figure \ref{fig:ChangeVar}. We make the change of variables consisting of a rotation of the axis and making the boundary of the domain parallel to one of the new axis (rotating from $(x,y)$ to $(u,v)$, see figure \ref{fig:ChangeVar}). The rotation is therefore of angle $\alpha$ such that $tan(\alpha)=g\,a$. The new integration domain is in the new coordinates given by $v\leq \frac{b \sin(\alpha)}{a} = \frac{g\,b}{\sqrt{1+g^{2}a^{2}}}$:

\begin{figure}
\begin{center}
	\includegraphics[width=.3\textwidth]{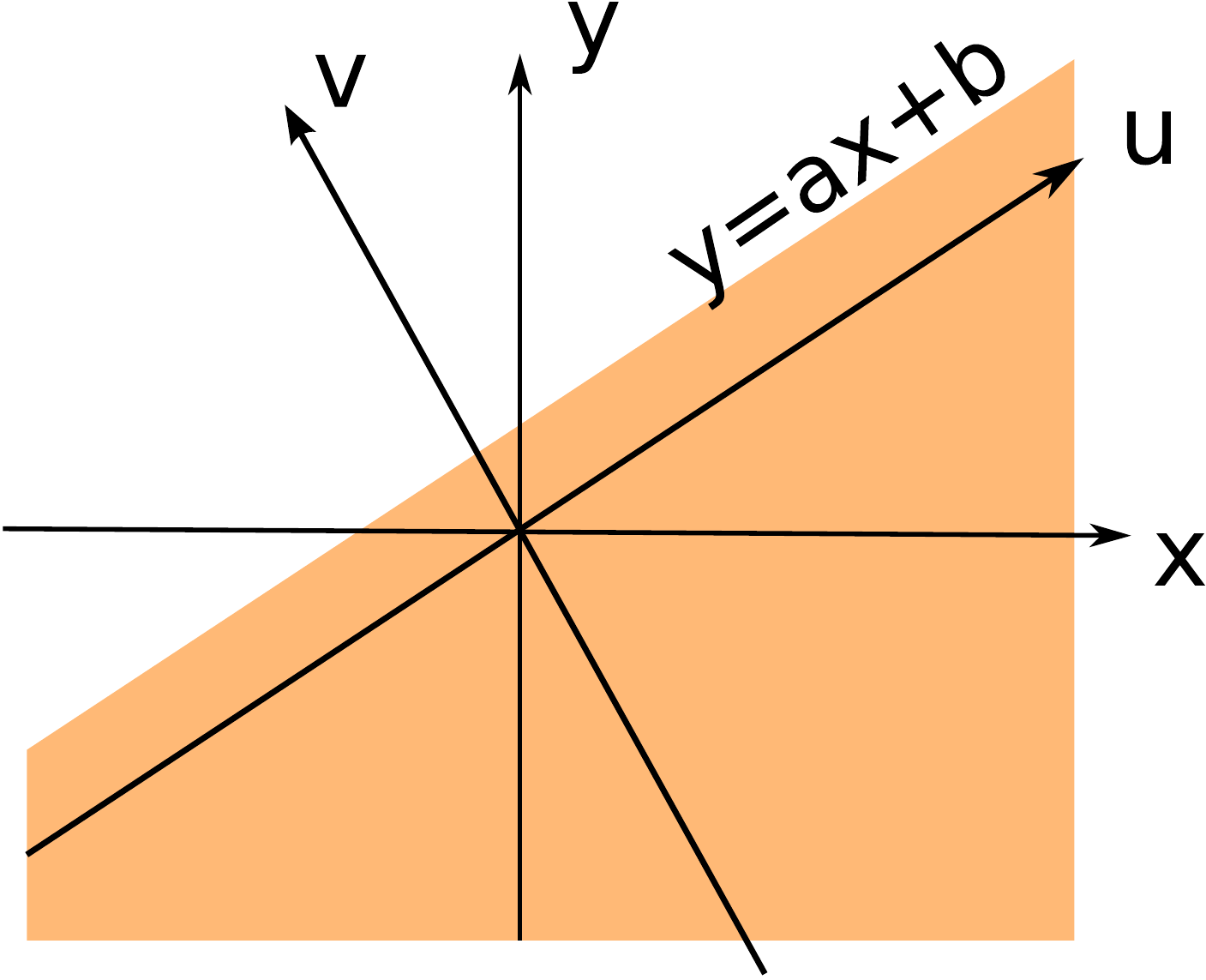}
\end{center}
\caption{Change of variable for the erf function.}
\label{fig:ChangeVar}
\end{figure} 

\begin{align*}
  \int_{\R}\int_{-\infty}^{a\,x +b}\frac{e^{-(x^{2}+y^{2})/2}}{2\pi} dx dy &=\int_{\R}\int_{-\infty}^{\frac{g\,b}{\sqrt{1+g^{2}a^{2}}}} e^{-(u^{2}+v^{2})/2} \frac{1}{2\pi} du dv\\
  & = \erf\left(\frac{gb}{\sqrt{1+g^{2}a^{2}}}\right)
\end{align*}
which reads with the parameters of the model:
\begin{equation}\label{eq:M-Erf}
	m_\beta^X(t) = \erf\left(\frac{g_{p(i)}\, \mu_\beta(t) + \gamma_{p(i)}}{\sqrt{1+g^{2}v_{\beta}(t)}}\right)
\end{equation}

The correlation function is simplified along the same lines. Indeed, the term $ \Debx(t,s)$ can be written as:

\begin{align*}
    \Debx(t,s) &=  \int_{\R^{2}} S(a\,x + b\,y+c) \, S(d\,y + e) Dx Dy\\
    & = \int_{R} S(d\,y+e) \int_{\R} S(a\,x+b\,y+c) Dx Dy\\
    & = \int_{\R} S(d\,y+e) S\left( \frac{by+c}{\sqrt{1+g^{2}a^{2}}}\right) Dy
\end{align*}
which, in terms of the model's parameters, read:
\begin{equation}\label{eq:C-Erf}
  \Debx(t,s)=\int_{\R} S(\sqrt{v_{\beta}(t)} y + \mu_{\beta}(t)) \,S\left ( \frac{C_{\beta\beta}(t,s) y + \mu_{\beta}(s)\sqrt{v_{\beta}(t)} } {v_{\beta}(t) + g^{2} (v_{\beta}(t)v_{\beta}(s)-C_{\beta \beta}(t,s)^2)}\right)Dy 
\end{equation}

%%%%%%%%%%%%%%
\bibliographystyle{plain}

% \bibliography{/Users/johnny/Work/Bibliography/odyssee,/Users/johnny/Work/Bibliography/perso}

\end{document}